\newtheorem{theorem}{Theorem}
\theoremstyle{definition}
\newtheorem{defen}{Definition}
\newtheorem*{rem}{Remarks}
\renewcommand*{\th}{\mathop{\text{\textrm{th}}}}
\newcommand*{\cth}{\mathop{\text{\textrm{cth}}}}
\newcommand*{\sh}{\mathop{\text{\textrm{sh}}}}
\newcommand*{\ch}{\mathop{\text{\textrm{ch}}}}
\newcommand*{\ctg}{\mathop{\text{\textrm{ctg}}}}
\newcounter{glava}
\newcommand*{\glava}[1]{\par\vspace{20pt}\refstepcounter{glava}
\begin{center}\textsc{\theglava. #1}\end{center}\par
}
\begin{document}

\begin{center}
\large \textbf{HYPERBOLIC TRIANGLES OF THE MAXIMUM AREA \\ WITH TWO
FIXED SIDES}

\vspace{15pt}

\textsc{E.\,I.\,Alekseeva}
\end{center}

\vspace{5pt}

\begin{quote}
\textsc{Abstract}\footnote{This paper is prepared under the
supervision of P.\,Bibikov and is submitted to a prize of the Moscow
Mathematical Conference of High-school Students. Readers are invited
to send their remarks and reports to mmks@mccme.ru}. The aim of this
paper is to consider the Lobachevskii geometry analog of a
well-known Euclidian problem; namely: to find a triangle with two
fixed sides and the maximum area.
\end{quote}

\glava{Introduction}\label{razd.introduction}

What is the triangle with two fixed sides and the maximum square? It
is obvious that in Euclidian geometry this triangle is right-angled.

The aim of this paper is to describe the respective triangle (we
shall call it \emph{the maximum square triangle}) \emph{in
Lobachevskii geometry}. It appears that the maximum square triangle
has a lot of properties corresponding to the properties of the
Euclidian right-angled triangle (see Table~\ref{table.svoystva};
here $\alpha$, $\beta$ and $\gamma$ are the angles which lie
opposite the sides $BC=a$, $AC=b$ and $AB=c$ correspondingly).

\vspace{30pt}

\begin{table}[h]
\caption{Properties of maximum square
triangles}\label{table.svoystva}
\end{table}

\begin{multicols}{2}
\begin{center}
\textsc{Euclidian Geometry} \par
\end{center}
\begin{enumerate}
  \item[1)] $\alpha=\beta+\gamma=\frac\pi 2$;
  \item[2)] the center of the circumcircle coincides with the
middle of the side $BC$;
  \item[3)] $\frac S 2=\frac b 2\cdot\frac c 2$;
  \item[4)] $\cos\alpha=0=\mathrm{const}$;
  \item[5)] $a^2=b^2+c^2$.
\end{enumerate}

\begin{center}
\textsc{Lobachevskii Geometry} \par
\end{center}
\begin{enumerate}
  \item[1)] $\alpha=\beta+\gamma<\frac\pi 2$;
  \item[2)] the center of the circumcircle coincides with the
middle of the side $BC$;
  \item[3)] $\sin\frac S 2=\th\frac b 2\cdot \th\frac c 2$;
  \item[4)] $\cos\alpha=\th\frac b 2\cdot\th\frac c 2\neq\mathrm{const}$;
  \item[5)] $\sh^2\frac a 2=\sh^2\frac b 2+\sh^2\frac c 2$.
\end{enumerate}
\end{multicols}

\vspace{30pt}

So it is \emph{a maximum square triangle} that should be considered
analogous with a Euclidian right-angled triangle.

\vspace{10pt}

\noindent\textsc{Acknowledgements.} The author is grateful to
P.\,V.\,Bibikov for useful discussions and for attention to this
work.

\clearpage

\glava{Poincare Disc Model}\label{razd.model}

We will consider \emph{Poincare disc model of Lobachevskii geometry}
(see~\cite{Efim, Pras}). In this model \emph{Lobachevskii plane} is
the interior of a unite disc; the boundary of this disc is called
\emph{the absolute}. \emph{Points} are Euclidian points;
\emph{lines} are either circle arcs that are orthogonal to the
absolute, or diameters of the absolute (Fig.~\ref{ris.picture1}).
\emph{The angle measure} in Poincare disc model is the same as in
Euclidian geometry.

\begin{figure}[h]
\begin{center}%
\includegraphics[scale=0.8]{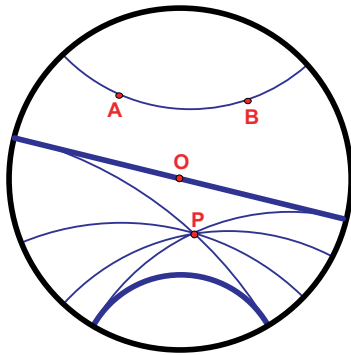}%
\caption{Poincare disk model}\label{ris.picture1}%
\end{center}%
\end{figure}

\emph{A triangle} consists of the circle arcs and the sum of its
angles is less than $\pi$. Let $\delta$ be \emph{the defect of a
triangle}, i.e., $\delta=\pi-\alpha-\beta-\gamma$, where $\alpha$,
$\beta$ and $\gamma$ are the angles of a triangle. It is clear that
the defect of a triangle satisfies the following:

1)  $\delta>0$;

2)  if the triangles $\triangle_1$ and $\triangle_2$ are equal, then
$\delta_1=\delta_2$;

3)  if the triangle $\triangle$ is decomposed into the triangles
$\triangle_1$ and $\triangle_2$, then $\delta=\delta_1+\delta_2$.

So the defect of a triangle satisfies the axioms of \emph{the
square}. It is proved (see~\cite{Nord}) that in Lobachevskii
geometry
$$
S(\triangle)=\delta=\pi-\text{\textbf{the sum of the angles}}.$$

One can see that there is a huge difference between squares in
Euclidian and Lobachevskii geometries. That is why a lot of
Euclidian problems connected with the square become more difficult
and interesting in Lobachevskii geometry.

\glava{Equidistant of the Equal Squares}\label{razd.equid}

\begin{defen}
Let $p$ be a non-Euclidian line. \emph{An equidistant with the base
$p$} is a set of points that are in a fixed half-plane and at a
fixed distance from $p$.
\end{defen}

In other words an equidistant in Lobachevskii geometry is the analog
of a Euclidian parallel line.

In Poincare disc model equidistants are either circle arcs, or
chords of the absolute (see~\cite{Efim, Pras}).

\begin{figure}[h]
\begin{center}
\includegraphics[scale=0.8]{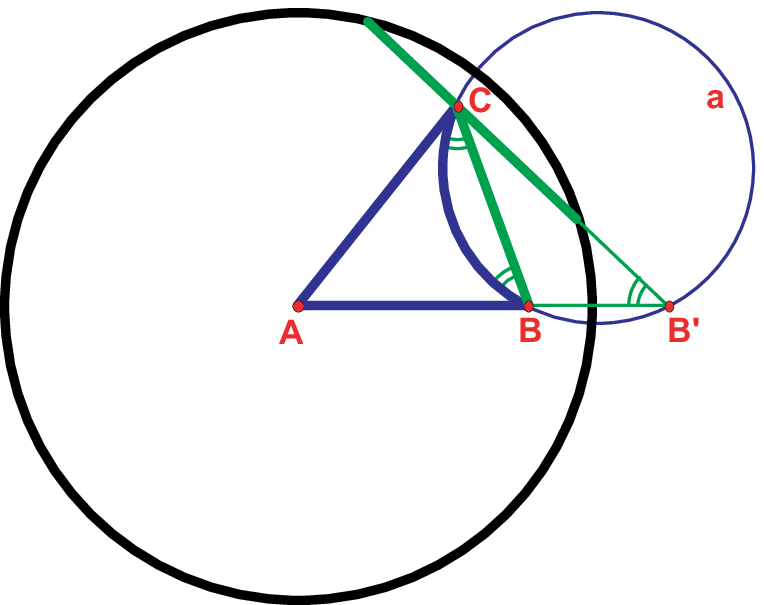}
\caption{The Shvartsman theorem}\label{ris.picture2}%
\end{center}
\end{figure}

\begin{theorem}[O.\,V.\,Shvartsman,~\cite{Shvar}]
Suppose $AB$ is a non-Euclidian segment and $A$ coincides with the
center of the absolute. Let $B'$ be the point symmetric to $B$ with
respect to the absolute\footnote{I.e., $B'$ is the image of $B$
under the inversion with respect to the absolute.} and let $\lambda$
be the chord of the absolute, the continuation of which passes
through $B'$; then for any point $C\in\lambda$ we have
$S(ABC)=2\tau=\mathrm{const}$, where $\tau=\angle AB'C$.
\end{theorem}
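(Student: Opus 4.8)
The plan is first to separate the two claims hidden in the statement. Since $\lambda$ is one fixed Euclidean line passing through the fixed point $B'$, and $A$ is fixed, the Euclidean angle $\tau=\angle AB'C$ between the ray $B'A$ and the ray $B'C\subset\lambda$ does not change as $C$ runs along $\lambda$. Hence the constancy asserted in the theorem will follow automatically once I establish the stronger equality $S(ABC)=2\tau$. To compute $S(ABC)$ I will use the result quoted earlier that the non-Euclidean area equals the defect, so that with $\alpha,\beta,\gamma$ the (Euclidean-measured) angles of the triangle at $A,B,C$ I must show $\alpha+\beta+\gamma=\pi-2\tau$.

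The geometric heart of the argument is the following observation. The non-Euclidean side $BC$ lies on a Euclidean circle $\omega$ orthogonal to the absolute; but a circle orthogonal to the absolute is invariant under inversion in the absolute, and since $B\in\omega$ its inverse $B'$ also lies on $\omega$. Thus $B,C,B'$ are concyclic on $\omega$. Two further collinearities organize the picture: because the inversion is centered at $A$, the points $A,B,B'$ are collinear with $B,B'$ on the same ray from $A$, so the chord $BB'$ of $\omega$ lies along the line $AB$; and because $C\in\lambda$ while $\lambda$ (extended) passes through $B'$, the chord of $\omega$ cut out by $\lambda$ is exactly $CB'$. Note also that, since the rays $B'A$ and $B'B$ coincide, $\tau=\angle AB'C=\angle BB'C$ is precisely the inscribed angle of $\omega$ at $B'$ in the triangle $BCB'$.

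Next I would read off the three angles. Since $A$ is the center, the sides $AB,AC$ are genuine Euclidean segments (diameters), so $\alpha=\angle BAC=\angle B'AC$ is the angle at $A$ of the Euclidean triangle $AB'C$. At $B$ the side $BA$ runs along the chord $BB'$ while $BC$ runs along the tangent to $\omega$ at $B$, so by the tangent–chord (inscribed-angle) theorem $\beta=\angle BCB'$. At $C$ the side $CA$ runs along the ray $CO$ and $CB$ along the tangent to $\omega$ at $C$; splitting this angle by the chord $CB'$ (the line $\lambda$) gives $\gamma=\angle ACB'-\theta$, where $\theta$ is the tangent–chord angle at $C$ for the chord $CB'$, i.e. (in the present configuration) $\theta=\pi-\angle CBB'$. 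Now I combine these with two elementary angle sums: for the Euclidean triangle $AB'C$,
\[
\alpha+\tau+\angle ACB'=\pi,
\]
and for the inscribed triangle $BCB'$,
\[
\angle BCB'+\angle CBB'+\tau=\pi.
\]
Substituting $\angle ACB'=\pi-\alpha-\tau$ and $\angle CBB'=\pi-\beta-\tau$ into the expression for $\gamma$ yields $\gamma=(\pi-\alpha-\tau)-(\pi-(\pi-\beta-\tau))=\pi-\alpha-\beta-2\tau$ after simplification, so that $\alpha+\beta+\gamma=\pi-2\tau$ and therefore $S(ABC)=\pi-(\alpha+\beta+\gamma)=2\tau$.

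The step I expect to be the main obstacle is not the algebra, which telescopes cleanly, but the configuration-dependent bookkeeping of the angle at $C$ (and, to a lesser degree, at $B$): deciding whether each tangent–chord angle enters as the inscribed angle itself or as its supplement, and whether $\gamma$ is obtained by adding or subtracting $\theta$ from $\angle ACB'$. I would pin these signs down by fixing an explicit orientation of the figure (e.g. placing $A$ at the origin with $B,B'$ on a coordinate axis) and checking one concrete position of $C$; once the arcs on which $B$ and $C$ lie are identified, the identity $\alpha+\beta+\gamma=\pi-2\tau$ falls out, and the constancy of the area over $C\in\lambda$ is immediate.
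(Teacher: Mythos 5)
Your proof is correct, and it runs on the same engine as the paper's: both identify the hyperbolic side $BC$ with an arc of the circle through $B$, $B'$, $C$ (orthogonal to the absolute, since it contains a point and its inverse), and both compute the defect by Euclidean angle-chasing with tangent--chord (inscribed) angles. The difference is in the decomposition. You work with the chords $BB'$ and $CB'$, which forces two auxiliary Euclidean triangles ($AB'C$ and $BB'C$) and produces the configuration-dependent term $\theta=\pi-\angle CBB'$ at $C$ --- exactly the bookkeeping you single out as the main obstacle. The paper works with the single chord $BC$ instead: by the inscribed-angle theorem, the angle between the chord $BC$ and the arc is the \emph{same} at both endpoints, namely the angle $\angle BB'C=\tau$ subtended from the far arc (the arc through $B'$ always lies on the opposite side of the line $BC$ from $A$, because $B$ lies between $A$ and $B'$ on their common line, so $A$ and $B'$ are separated by any line through $B$). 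Hence the Euclidean rectilinear triangle $ABC$ has angles $\alpha$, $\beta+\tau$, $\gamma+\tau$, its angle sum gives $\alpha+\beta+\gamma+2\tau=\pi$ at once, and $S(ABC)=\pi-\alpha-\beta-\gamma=2\tau$ with no case analysis. So your route is sound and has the side benefit of naming each hyperbolic angle as an explicit Euclidean one (e.g.\ $\beta=\angle BCB'$, and the triangle $AB'C$ you introduce is the one the paper later exploits in Theorem~\ref{th.svoystva}), but the paper's choice of chord makes the two correction angles equal by construction and thereby dissolves the orientation issues you anticipated having to settle by hand.
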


\begin{proof}
Let $BC$ be the Euclidian segment and let $a$ be the circle that
passes through $B$, $B'$ and $C$. Then $a$ is orthogonal to the
absolute (see~\cite{Zasl}) so the intersection of $a$ with the
Poincare disc defines the non-Euclidian line
(Fig.~\ref{ris.picture2}).

It is obvious that the angle between the segment $BC$ and the arc
$\smile BC$ of circle $a$ is equal to $\tau$. Therefore, the sum of
the \emph{Euclidian} angles in \emph{Euclidian} triangle $ABC$
equals $\alpha+\beta+\gamma+2\tau=\pi$. Finally, we get
$$S(ABC)=\pi-(\alpha+\beta+\gamma)=2\tau=\mathrm{const}.$$
\end{proof}

\begin{defen}
Let us remember that a chord of the absolute is an equidistant in
Lobachevskii geometry. The chord that satisfies the conditions of
the Shvartsman theorem will be called \emph{the equidistant of the
equal squares for segment $AB$}.
\end{defen}

One can see that the Shvartsman theorem is the analog of a
well-known fact of Euclidian geometry: \emph{the set of points $C$
such that $S(ABC)=\mathrm{const}$ is a line parallel to $AB$}.

\glava{Maximum square triangles and their
properties}\label{razd.trian}

Now we can solve the main problem: \emph{find a non-Euclidian
triangle $ABC$ with two fixed sides $AB$ and $AC$, and the maximum
square}.

Without loss of generality it can be assumed that vertex $A$
coincides with the center of the absolute. Let us fix the side $AB$;
then the vertex $C$ lies on circle $\omega$ with center $A$ and
fixed radius. Consider point $B'$ symmetrical to $B$ with respect to
the absolute and equidistant of the equal squares $\lambda$ for
fixed segment $AB$. By $S/2$ denote the angle between $AB'$ and
$\lambda$. By the Shvartsman theorem triangle $ABC$ with sides $AB$
and $AC$ such that $S(ABC)=S$ exists if and only if the chord
$\lambda$ intersects the circle $\omega$ (Fig.~\ref{ris.picture3}).
So we should find the equidistant $\lambda_{\max}$ that intersects
$\omega$ and forms the maximum angle with $AB'$. It is obvious that
$\lambda_{\max}$ is \emph{a tangent line to $\omega$}. So to build
the maximum square triangle $ABC$ we must build the tangent line
$B'C$ to circle $\omega$.

\begin{figure}[h]
\begin{center}
\includegraphics[scale=0.8]{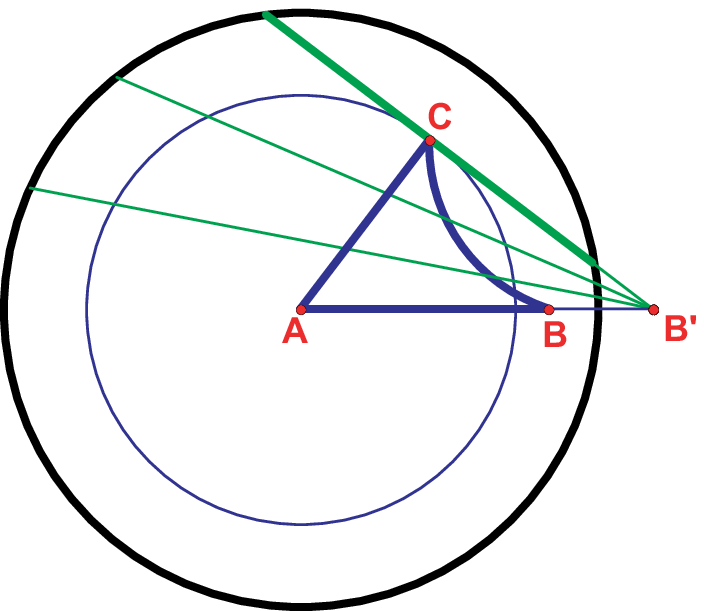}
\caption{Maximum square triangle}\label{ris.picture3}%
\end{center}%
\end{figure}

Since a maximum square triangle is an analog of a Euclidian
right-angled triangle, it will be interesting to describe this
analogy using corresponding properties of these triangles (see
Table~\ref{table.svoystva}). We shall state these properties in the
following theorem.

\begin{theorem}\label{th.svoystva}
Suppose $ABC$ is triangle with fixed sides $AC=b$ and $AB=c$; then
the following conditions are equivalent:

\textup{(}0\textup{)} $ABC$ has the maximum square;

\textup{(}1\textup{)} $\alpha=\beta+\gamma<\frac\pi 2$;

\textup{(}2\textup{)} the center of the circumcircle coincides with
the middle of the side $BC$;

\textup{(}3\textup{)} $\sin\frac S 2=\th\frac b 2\cdot \th\frac c
2$;

\textup{(}4\textup{)} $\cos\alpha=\th\frac b 2\cdot\th\frac c
2\neq\mathrm{const}$;

\textup{(}5\textup{)} $\sh^2\frac a 2=\sh^2\frac b 2+\sh^2\frac c
2$.
\end{theorem}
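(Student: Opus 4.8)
The plan is to prove this as a cycle of implications, establishing $(0)\Leftrightarrow(1)$ first through the geometric characterization already developed, then deriving the quantitative properties $(3),(4),(5)$ from the trigonometric relations of the maximum configuration, and finally showing $(2)$ equivalent to the others. Let me think about how each piece goes.

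**The central equivalence $(0)\Leftrightarrow(1)$.** From the discussion preceding the theorem, the maximum square triangle is characterized by the condition that $B'C$ is tangent to $\omega$ at $C$. Since $AC$ is a radius of $\omega$, tangency means $AC\perp B'C$, i.e. the Euclidean angle $\angle ACB'=\frac\pi2$. I would translate this Euclidean right angle into the non-Euclidean angle data. In the Shvartsman configuration, $S(ABC)=2\tau$ with $\tau=\angle AB'C$, and the Euclidean triangle $AB'C$ (where $A$ is the center of the absolute) has its angles tied to the non-Euclidean angles $\alpha,\beta,\gamma$. The key observation is that the non-Euclidean angle $\gamma=\angle ACB$ equals the angle between the two arcs at $C$, and tangency of $B'C$ to $\omega$ forces a specific relation. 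The cleanest route is: the maximum configuration is exactly the one where the Euclidean line $B'C$ is tangent, which geometrically means $\angle B'CA=\frac\pi2$; chasing the arc-chord angles (each non-Euclidean side makes an angle with its Euclidean chord, and these are the $\tau$-type angles) yields $\alpha=\beta+\gamma$. The strict inequality $\alpha<\frac\pi2$ then follows since $\beta,\gamma>0$ and the angle sum is below $\pi$.

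**The computational properties $(3),(4),(5)$.** Here I would set up the hyperbolic trigonometric identities for the triangle with the constraint $\alpha=\beta+\gamma$. The hyperbolic law of cosines, $\cos\alpha=-\cos\beta\cos\gamma+\sin\beta\sin\gamma\ch a$ together with the dual (angular) cosine rule $\ch a=\frac{\cos\alpha+\cos\beta\cos\gamma}{\sin\beta\sin\gamma}$ and the rule $\cos\alpha=-\cos\beta\cos\gamma+\sin\beta\sin\gamma\,\ch a$ are the engine. Substituting $\alpha=\beta+\gamma$, so $\cos\alpha=\cos\beta\cos\gamma-\sin\beta\sin\gamma$, into these and simplifying should collapse the right-hand sides into the stated forms. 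For $(4)$ I expect $\cos\alpha=\th\frac b2\th\frac c2$ to drop out after using the half-angle substitutions; for $(3)$, since $S=\delta=\pi-\alpha-\beta-\gamma=\pi-2\alpha$ (using $\alpha=\beta+\gamma$), we get $\frac S2=\frac\pi2-\alpha$, hence $\sin\frac S2=\cos\alpha$, which immediately gives $(3)$ from $(4)$. For $(5)$ I would feed the hyperbolic law of cosines $\ch a=\ch b\,\ch c-\sh b\,\sh c\cos\alpha$ into the identity $\ch x=1+2\sh^2\frac x2$ and substitute $\cos\alpha=\th\frac b2\th\frac c2$; after the half-angle algebra the cross terms should cancel to leave $\sh^2\frac a2=\sh^2\frac b2+\sh^2\frac c2$.

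**The circumcircle property $(2)$.** I would show $(5)\Rightarrow(2)$ (or the converse) by computing the circumradius condition. The midpoint $M$ of $BC$ is the circumcenter precisely when $MA=MB=MC$; since $M$ is the midpoint, $MB=MC$ automatically, so the condition reduces to $MA=MB$. Using the hyperbolic formula for the distance from a vertex to the opposite midpoint (a median-length formula, expressible via $\ch$ of the median in terms of $\ch b,\ch c,\ch a$), I would set the median from $A$ equal to $\frac a2$ and show this equation is equivalent to $(5)$ after simplification.

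**Main obstacle.** I expect the hardest part to be $(0)\Leftrightarrow(1)$: converting the Euclidean tangency condition in the Poincaré model into the clean non-Euclidean statement $\alpha=\beta+\gamma$ requires carefully tracking how each non-Euclidean angle relates to the Euclidean chord-arc angles at each vertex, and there is real geometry (not just algebra) to get right. Once $(1)$ is in hand in the form $\alpha=\beta+\gamma$ with $S=\pi-2\alpha$, the remaining equivalences $(3),(4),(5),(2)$ are all consequences of standard hyperbolic trigonometric identities, and I anticipate those being routine though somewhat tedious; the only place where care is needed is verifying the direction of the implications closes the cycle rather than leaving a gap.
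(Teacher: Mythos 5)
Your treatment of $(0)\Leftrightarrow(1)$ is the same as the paper's: tangency of $B'C$ to $\omega$ gives the Euclidean right angle at $C$, and the angle count (the paper writes it as $\tau+\alpha=\frac\pi2$ combined with $S=2\tau=\pi-\alpha-\beta-\gamma$ from the Shvartsman theorem) yields $\alpha=\beta+\gamma$. For the other parts your route genuinely differs. The paper never leaves the Poincar\'e model: it reads (3) and (4) off the Euclidean right triangle $ACB'$ (using $AC_E=\th\frac b2$, $AB'_E=\cth\frac c2$), gets the converses from the Euclidean sine theorem (which forces $\sin\angle ACB'=1$, i.e.\ tangency), and for $(1)\Leftrightarrow(2)$ cites an absolute-geometry fact. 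You instead work intrinsically, and your unexecuted algebra does in fact go through: substituting $\alpha=\beta+\gamma$ into the dual law of cosines at each of the three vertices gives $\ctg\alpha\ctg\gamma=\sh^2\frac b2$, $\ctg\alpha\ctg\beta=\sh^2\frac c2$, $\ctg\beta\ctg\gamma=\ch^2\frac a2$; adding the first two and using the third gives (5), since $\ctg\alpha(\ctg\beta+\ctg\gamma)=\frac{\cos\alpha}{\sin\beta\sin\gamma}=\sh^2\frac a2$; then (4) follows because the law-of-cosines step $(4)\Leftrightarrow(5)$ is reversible (note $\sh b\,\th\frac b2=\ch b-1$, so (4) turns $\ch a=\ch b\ch c-\sh b\sh c\cos\alpha$ into $\ch a=\ch b+\ch c-1$, and back), and (3) follows from (4) by your observation $\sin\frac S2=\cos\alpha$ when $S=\pi-2\alpha$. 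Your median argument for (2) also works and is bidirectional: $\ch m_a=\frac{\ch b+\ch c}{2\ch\frac a2}$, and $m_a=\frac a2$ is equivalent to $\ch a+1=\ch b+\ch c$, i.e.\ to (5); this replaces the paper's citation by a computation.

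The genuine gap is the one you flag in your closing sentence but do not repair: your middle section derives everything \emph{from} the maximality hypothesis, so as written you obtain only $(0)\Rightarrow(1)\Rightarrow(4),(5),(3)$ and the two-sided links $(4)\Leftrightarrow(5)\Leftrightarrow(2)$; nothing brings (3), (4), (5) back to (0), and a family of one-way implications out of (0) does not prove an equivalence theorem. The paper closes exactly these loops with the Euclidean sine theorem in the model. In your intrinsic setting the cheapest repair is uniqueness: for fixed $b,c$, condition (4) determines $\cos\alpha$, hence $\alpha$, hence the triangle up to congruence by SAS; since the maximal triangle satisfies (4), any triangle satisfying (4) is congruent to it, so $(4)\Rightarrow(0)$. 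Likewise (5) determines $a$ and hence the triangle by SSS, and (3) determines $S$ (both $S$ and $S_{\max}$ lie in $(0,\pi)$, where $\sin\frac S2$ is injective), forcing $S=S_{\max}$. With that uniqueness paragraph added, your proof closes and is complete.
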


\begin{proof}
We will use the construction described above.

$(0)\Leftrightarrow(1)$ First suppose that triangle $ABC$ has the
maximum square. Then $\angle ACB'=\frac\pi 2$ and we get
$\tau+\alpha=\frac\pi 2$ $\Leftrightarrow$
$(\pi-\alpha-\beta-\gamma)+2\alpha=\pi$, so $\alpha=\beta+\gamma$.
It is obvious that $\alpha<\frac\pi 2$.

Secondly suppose that the condition $\alpha=\beta+\gamma$ holds.
Then the \emph{Euclidian} angle $\angle ACB'$ in the triangle $AB'C$
is equal to $\pi-\alpha-\frac S 2=\frac\pi 2$. So the line $B'C$ is
tangent to $\omega$ (Fig.~\ref{ris.picture3}) and the triangle $ABC$
has the maximum square.

$(1)\Leftrightarrow(2)$ This statement is well known and belongs to
the absolute geometry.

$(0)\Leftrightarrow(3)$ If the triangle $ABC$ has the maximum
square, then $\sin\frac S 2=\frac{AC_E}{AB'_E}$, where by $AC_E$ and
$AB'_E$ we denote the \emph{Euclidian} lengths of the
\emph{Euclidian} segments $AC$ and $AB'$ respectively. As $B'$ is
symmetric to $B$ with respect to the absolute, then
$\frac{1}{AB'_E}=AB_E$. It is proved in~\cite{Pras} that
$AB_E=\th\frac c 2$ and $AC_E=\th\frac b 2$. Finally, we get
$\sin\frac S 2=AC_E\cdot AB_E=\th\frac b 2\cdot \th\frac c 2$.

The converse statement follows from \emph{the sinus theorem}:
$\frac{AB'_E}{\sin\angle ACB'}=\frac{AC_E}{\sin S/2}$. If we combine
this with (3), we get $\sin\angle ACB'=1$. So $\angle ACB'=\frac\pi
2$ and the line $B'C$ is tangent to $\omega$.

$(0)\Leftrightarrow(4)$ This equivalence is proved in the same way
as the previous one.

$(4)\Leftrightarrow(5)$ To prove this statement, we use \emph{the
cosine theorem} (see~\cite{Pras}): $\ch a=\ch b\ch c-\sh b\sh c
\cos\alpha$. If we replace $\cos\alpha$ by $\th\frac b
2\cdot\th\frac c 2$ in the cosine theorem, we obtain (5). The
converse statement is proved in the same way.
\end{proof}

Thus most of the right-angled triangle properties correspond to
those of the maximum square triangle (and not the right-angled one,
as might have been expected) in Lobachevskii geometry. So it is
\emph{the maximum square triangle} that should be considered
analogous with the Euclidian right-angled triangle.

\begin{rem}
1. Using the proof of the (0) and (3) properties equivalence, we get
the formula to find out the square of an \emph{arbitrary}
non-Euclidian triangle through its two sides and the angle between
them, i.e., the analog of the Euclidian formula
$S=\frac{bc\sin\alpha}{2}$: $$\ctg\frac S 2=\frac{\cth\frac b
2\cdot\cth\frac c 2-\cos\alpha}{\sin\alpha}.$$

Besides, the Shvartsman theorem makes it easy to prove many other
non-Euclidian formulae connected with the triangle square. It also
explains why it is the half of the square and the halves of the
sides that these formulae contain.

2. Properties (1)--(5) from Theorem~\ref{th.svoystva} transform into
corresponding Euclidian properties (see Table~\ref{table.svoystva})
as $b,c\to 0$. This again shows that a maximum square triangle is an
analog of a Euclidian right-angled triangle in Lobachevskii
geometry.

3. However there are some differences between these two types of
triangles. The most extraordinary one is property (4):
$\cos\alpha=\th\frac b 2\cdot\th\frac c 2$. First of all, the angle
$\alpha$ depends on the sides $b$ and $c$, whereas in a Euclidian
right-angled triangle $\alpha=\frac\pi 2=\mathrm{const}$. Second, as
$b,c\to\infty$, the angle $\alpha$ tends to 0! Moreover, the longer
sides $b$ and $c$ are, the smaller the angle $\alpha$ is
(Fig.~\ref{ris.picture4}).

\begin{figure}[h]
\begin{center}
\includegraphics[scale=0.8]{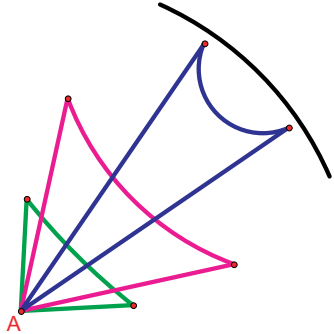}
\caption{Maximum square triangles}\label{ris.picture4}%
\end{center}%
\end{figure}

4. It is natural to call property (5) \emph{the non-Euclidian
pythagorean theorem}, because this relation looks more like the
Euclidian pythagorean theorem $a^2=b^2+c^2$ than the relation $\ch
a=\ch b\ch c$ in the non-Euclidian right-angled triangle.
\end{rem}

\glava{Application: the isoperimetric problem}\label{razd.izoper}

One of the most famous and interesting extremum problems is the
so-called isoperimetric problem: \emph{find a fixed length curve
that bounds a figure of the maximum square}. The answer in Euclidian
geometry is well known: this curve is \emph{the circle}
(see~\cite{Prot}). Using the properties of the maximum square
triangle, we shall now solve this problem in Lobachevskii geometry.

\begin{theorem}
Any fixed length curve that bounds a figure of a maximum square is a
circle.
\end{theorem}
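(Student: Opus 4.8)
The plan is to prove the isoperimetric statement by a standard variational/symmetrization argument, using the maximum-square-triangle theory developed above as the local optimality condition. I would reason by contradiction: suppose a curve $\Gamma$ of fixed length $L$ bounds a region of maximum area but is \emph{not} a circle. The goal is to exhibit a perturbation that keeps the perimeter equal to $L$ while strictly increasing the enclosed area, contradicting maximality.

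First I would establish convexity: if the bounding region were non-convex, one could reflect a ``dented'' boundary arc across the chord joining its endpoints, keeping the arc length unchanged while enlarging the area, so any maximizer must be convex. Next I would use the classical four-hinge (Steiner) idea adapted to the hyperbolic setting. Pick two points $P$ and $Q$ on $\Gamma$ that divide its length into two equal halves; the chord $PQ$ splits the region into two pieces of equal perimeter, and in an optimal configuration these two pieces must have equal area (otherwise reflect the larger across $PQ$). It then suffices to maximize the area of \emph{one} half, a region bounded by a fixed-length arc and the free segment $PQ$.

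The crux is the local condition at an arbitrary boundary point $C$: take any third point on the arc and form the hyperbolic triangle with apex $C$. Here is where Theorem~\ref{th.svoystva} enters. If the boundary is locally optimal, then moving $C$ along the arc while holding the two ``side-lengths'' to its neighbours fixed must not increase area; the maximum-area condition for a triangle with two fixed sides forces the angle relation $\alpha=\beta+\gamma$, i.e. property (1), to hold infinitesimally along $\Gamma$. Passing to the limit as the neighbouring points coalesce, this first-order condition becomes a statement that the \emph{geodesic curvature} of $\Gamma$ is constant. In Lobachevskii geometry the only closed curves of constant geodesic curvature are circles (together with horocycles and equidistants, which do not bound a compact region), so $\Gamma$ must be a circle.

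I expect the main obstacle to be making the limiting step rigorous: the theorem as stated is a discrete, three-point extremal fact, and converting the pointwise ``two fixed sides, maximum area'' condition into a smooth curvature equation requires care about differentiability of $\Gamma$ and the correct infinitesimal bookkeeping of the defect (area) under the Shvartsman construction. A clean alternative that sidesteps the smoothness issue is a direct symmetrization comparison: inscribe a sequence of hyperbolic polygons in $\Gamma$, apply the triangle result edge-by-edge to show each inscribed polygon can be ``balanced'' toward a regular one of the same perimeter with no loss of area, and take the limit of regular inscribed polygons, whose circumscribed circles converge to the extremal circle. Either route reduces the global isoperimetric inequality to the already-proven local triangle optimality, and the identification of the limiting curve as a circle is the natural endpoint.
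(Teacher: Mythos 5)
Your opening moves (existence of the maximizer, convexity, and the perimeter-halving chord with the reflection argument) coincide with the paper's. But your crux diverges from the paper's and contains a genuine gap. The local condition you impose at an arbitrary boundary point $C$ with its two neighbours $X,Y$ is not an admissible perturbation: you cannot ``move $C$ along the arc while holding the two side-lengths to its neighbours fixed,'' since two distance constraints pin $C$ down; and if instead you hinge at $C$ (fix $CX$, $CY$, vary the angle), then the distance $XY$ changes, so the perturbed triangle no longer fits against the rest of the figure and the perimeter--area bookkeeping collapses. The triangle-with-two-fixed-sides optimality can be invoked only when the third side of the triangle is a \emph{free} geodesic segment that may be redrawn after the hinge --- which happens precisely in the half-figure bounded by the arc and the chord $PQ$, not at an interior point of the arc. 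Worse, the condition $\alpha=\beta+\gamma$ cannot ``hold infinitesimally'' along the curve: for three nearby points of a convex curve the apex angle tends to $\pi$ and the base angles tend to $0$, so such thin triangles are never maximum-square triangles, and no limiting form of property (1) emerges from them. Finally, even granting a first-order condition, the passage to constant geodesic curvature and the classification of closed constant-curvature curves requires smoothness of $\Gamma$ and machinery far beyond the paper; you flag this obstacle yourself but do not close it, and the polygon-balancing alternative remains a sketch with no actual argument for the ``no loss of area'' step.

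The repair is to stay global, as the paper does, and no limit is needed at all. For an arbitrary point $A$ of the half-arc, hinge at $A$ against the \emph{diameter endpoints} $B$ and $C$: keep the two caps rigidly attached to the chords $AB$ and $AC$, vary the angle $\angle BAC$, redraw the base segment, and reflect in it. Optimality of $F$ then forces the triangle $ABC$ to be a maximum-square triangle for \emph{every} $A$ on the boundary. Now use property (2) of Theorem~\ref{th.svoystva} --- the circumcenter of a maximum-square triangle is the midpoint $O$ of the side $BC$ --- to conclude $OA=OB=OC$ for every $A$. Since $O$, $B$, $C$ are fixed, every boundary point lies at the fixed distance $OB$ from $O$, i.e., the curve is a circle. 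Note that it is property (2), not property (1), that turns the extremal condition directly into the metric statement ``equidistant from $O$,'' and this is exactly what lets the paper avoid any curvature computation or smoothness hypothesis.
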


\begin{proof}
Let $F$ be a figure of maximum square $S$ bounded by the curve $f$
of length $l$ (the existence of $F$ is proved in the same way as in
Euclidian geometry; see~\cite{Prot}). It is obvious that $F$ is
convex. Let $BC$ be \emph{the diameter} of $F$, i.e., $BC$ halves
the perimeter of $F$. Then $BC$ also halves the square of $F$.

Let us show that all points of $f$ are at a fixed distance from the
middle $O$ of segment $BC$. Let $A\in f$ be an arbitrary point. We
claim that triangle $ABC$ has the maximum square. Indeed, assume the
converse. Consider the half of $F$ bounded by $BC$ and $f$, and
containing $A$. Let us fix sides $AB$ and $AC$ and change the angle
$\angle BAC$ so that the square of triangle $ABC$ be maximum. Then
we get the figure with the perimeter $l/2$ and the square larger
than $S/2$. If we reflect this figure with respect to line $BC$, we
get figure $F'$ with the perimeter $l$ and the square larger than
$S$. This contradiction proves that triangle $ABC$ has the maximum
square. Using property (2) of Theorem~\ref{th.svoystva}, we get
$OA=OB=OC$. So, the curve $f$ is a circle.
\end{proof}

\textsc{Lyceum "Second School", Moscow, Russia}

\emph{E-mail address}: \texttt{matmail@bk.ru}

\end{document}